\documentclass[11pt]{amsart}

\usepackage[T1]{fontenc}
\usepackage[utf8]{inputenc}
\usepackage{lmodern,microtype}
\usepackage{geometry}
\usepackage{amsmath,amssymb,amsthm,mathtools}
\usepackage{enumitem}
\usepackage{aliascnt}
\usepackage{xcolor,booktabs,tabularx}

\usepackage[hidelinks]{hyperref}
\usepackage[nameinlink,noabbrev]{cleveref}

\numberwithin{equation}{section}

\newtheorem{theorem}{Theorem}[section]
\newaliascnt{proposition}{theorem}
\newtheorem{proposition}[proposition]{Proposition}
\aliascntresetthe{proposition}
\newaliascnt{lemma}{theorem}
\newtheorem{lemma}[lemma]{Lemma}
\aliascntresetthe{lemma}
\newaliascnt{corollary}{theorem}

\aliascntresetthe{corollary}
\newaliascnt{question}{theorem}

\aliascntresetthe{question}

\theoremstyle{definition}
\newaliascnt{definition}{theorem}

\aliascntresetthe{definition}

\theoremstyle{remark}
\newaliascnt{remark}{theorem}

\aliascntresetthe{remark}

\crefname{theorem}{Theorem}{Theorems}
\Crefname{theorem}{Theorem}{Theorems}
\crefname{proposition}{Proposition}{Propositions}
\Crefname{proposition}{Proposition}{Propositions}
\crefname{lemma}{Lemma}{Lemmas}
\Crefname{lemma}{Lemma}{Lemmas}
\crefname{corollary}{Corollary}{Corollaries}
\Crefname{corollary}{Corollary}{Corollaries}
\crefname{question}{Question}{Questions}
\Crefname{question}{Question}{Questions}
\crefname{definition}{Definition}{Definitions}
\Crefname{definition}{Definition}{Definitions}
\crefname{remark}{Remark}{Remarks}
\Crefname{remark}{Remark}{Remarks}

\newcommand{\C}{\mathbb C}
\newcommand{\D}{\mathbb D}

\newcommand{\Aut}{\operatorname{Aut}}
\newcommand{\Sc}{\mathcal S_c(2)}
\newcommand{\Ec}{\mathcal E_c}
\newcommand{\Af}{\mathcal A}

\author{Luka Boc Thaler}
\thanks{The author was supported by the research program P1-0291 and project J1-70033, all from ARRS, Republic of Slovenia}

\address{L. Boc Thaler: Faculty of Mathematics and Physics, University of Ljubljana, SI--1000 Ljubljana, Slovenia. Institute of Mathematics, Physics and Mechanics, Jadranska 19, 1000 Ljubljana, Slovenia.} \email{luka.boc@fmf.uni-lj.si}

\title{On the rank-one limit sets for maps in the constant-Jacobian shear group of $\C^2$}
\subjclass[2020]{37F50, 32H50, 30D35}
\keywords{holomorphic automorphism, recurrent Fatou component, generalized H\'enon map, Nevanlinna theory}
\date{}
\begin{document}
\begin{abstract}

Let $\Sc$ be the subgroup of $\Aut(\C^2)$ generated by affine maps and shear automorphisms with constant Jacobian. We determine the possible conformal types of the rank-one limit set of an invariant recurrent Fatou component when the induced dynamics has infinite order. 
In particular we show that, under this infinite-order hypothesis, the limit set of a finite composition of H\'enon maps (polynomial or transcendental), can only be conformally equivalent to a disk or an annulus. Thus we exclude the remaining three conformal types, i.e. $\D^*$, $\C^*$ and $\C$, from the classification result by Arosio, Benini, Forn\ae ss and Peters \cite{ABFP2019}. We also prove that the set of fixed points of a finite composition of H\'enon maps (polynomial or transcendental) contains no entire curves.
\end{abstract}
\maketitle

\section{Introduction}
The classification of Fatou components plays a central role in the study of global holomorphic dynamics. For rational maps of degree at least two, the classical theory of Fatou and
Julia, together with the work of Siegel, Arnold and Herman, provides the
standard classification of periodic Fatou components
\cite{Fatou1919,Julia1918,Siegel1942,Arnold1961,Herman1979}: every such
component is an immediate attracting or parabolic basin, a Siegel disk,
or a Herman ring. Sullivan's no wandering domains theorem, which implies that every Fatou component of a rational map is eventually periodic \cite{Sullivan1985}, completes the classification.
For transcendental entire functions, the periodic components are attracting
or parabolic basins, Siegel disks, or Baker domains \cite{Bergweiler1993},
however wandering domains may also occur \cite{Baker1976,Baker1984}.
Their internal dynamics of wandering domains have been studied extensively, see \cite{BEFRS2022, BRS2013}.

In several complex variables the picture is considerably more intricate.
For holomorphic endomorphisms of $\mathbb{P}^2$ of degree at least two,
recurrent periodic Fatou components were classified by Forn{\ae}ss and
Sibony \cite{FS1995}, with Ueda excluding the punctured disk as a 
limit set \cite{Ueda2008}.
For non-recurrent invariant components, Lyubich and Peters \cite{LP2014} obtained a
classification under the additional hypothesis of a unique limit set. The author, together with Forn{\ae}ss and Peters, subsequently constructed an example of endomorphism  with an invariant non-recurrent Fatou component whose limit set is a punctured disk, thereby  realizing the remaining
possibility in that classification \cite{BTFP2015}.

Astorg, Buff, Dujardin, Peters and Raissy constructed polynomial
skew-products, which extend to endomorphisms of $\mathbb{P}^2$, with wandering
Fatou components \cite{ABDPR2016}, showing that Sullivan's theorem does
not extend to a several variable setting.
Further examples were obtained by the author in collaboration with Astorg and Peters \cite{ABTP2023, ABT2026}. 

For polynomial automorphisms of $\mathbb{C}^2$ of H\'enon type, the
 work of Bedford and Smillie \cite{BS1991a,BS1991b}, together
with results of Forn{\ae}ss and Sibony  \cite{FS1995},
describes the recurrent periodic components. In \cite{Krikorian2025},  R.~Krikorian  recently proved the existence of attracting Herman rings for dissipative  H\'enon maps, realizing the remaining possibility in that classification.

Under the moderate dissipativity assumption $|\det DF|<d^{-2}$, where $d$
is the degree of the H\'enon map, Lyubich and Peters proved that every
invariant Fatou component with bounded forward orbits is an attracting
basin, a semi-parabolic basin, or the basin of a rotation domain
\cite{LP2014}.
Surprisingly, Berger and Biebler showed that wandering components also occur for H\'enon maps \cite{BB2023}.

This finally brings us to the work of Arosio, Benini, Forn{\ae}ss and Peters \cite{ABFP2019}, who obtained the following classification, which motivated the present work.

\begin{theorem}\label{thm:abfp}
Let $F$ be a holomorphic automorphism of $\mathbb{C}^2$ with constant
Jacobian $\delta$ and let $\Omega$ be an invariant recurrent Fatou
component for $F$. Then there exists a holomorphic retraction $\rho$
from $\Omega$ to a closed complex submanifold $\Sigma \subset \Omega$,
called the \emph{limit set}, such that for all limit maps $h$
there exists an automorphism $\eta$ of $\Sigma$ such that $h = \eta \circ \rho$. Every orbit converges to $\Sigma$, and $F|_{\Sigma} \colon \Sigma \to \Sigma$
is an automorphism. Moreover:
\begin{enumerate}

\item If $\dim \Sigma = 0$, then $\Omega$ is the basin of an attracting
fixed point, and is biholomorphically equivalent to $\mathbb{C}^2$.

\item If $\dim \Sigma = 1$, either $\Sigma$ is biholomorphic to a circular
domain $A$, and there exists a biholomorphism from $\Omega$ to
$A \times \mathbb{C}$ which conjugates the map $F$ to
$$
(z,w) \longmapsto
\left(
e^{2\pi i\theta}z,
\frac{\delta}{e^{2\pi i\theta}}w
\right),
$$
where $\theta$ is irrational, or there exists $j \in \mathbb{N}$ such that
$F^j|_{\Sigma} = \operatorname{id}_{\Sigma}$,
and there exists a biholomorphism from $\Omega$ to
$\Sigma \times \mathbb{C}$ which conjugates the map $F^j$ to
$$
(z,w) \longmapsto (z,\delta^j w).
$$

\item $\dim \Sigma = 2$ if and only if $|\delta|=1$. In this case there
exists a sequence of iterates converging to the identity on $\Omega$.

\end{enumerate}
\end{theorem}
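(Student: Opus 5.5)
The plan follows the strategy of \cite{ABFP2019}. It has three stages: build the retraction from limit maps, use the constant Jacobian to control the normal direction, and then split by $\dim\Sigma$.

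\emph{Step 1: the retraction.} Recurrence gives a point $p\in\Omega$ and $n_k\to\infty$ with $F^{n_k}(p)\to q\in\Omega$. Passing to a subsequence, $F^{n_k}\to h$ locally uniformly on $\Omega$, and since $\Omega$ is a Fatou component, $h(\Omega)\subset\Omega$. I would pick a further subsequence so that $m_k=n_{k+1}-n_k\to\infty$ and $F^{m_k}\to\rho$ on $\Omega$. Then $\rho\circ h=h$ on $\Omega$, so $\rho$ is the identity on $h(\Omega)$. The standard argument (Bedford's) then shows $\rho\circ\rho=\rho$, so $\rho$ is a holomorphic retraction. By Cartan/Rossi, $\Sigma=\rho(\Omega)$ is a closed complex submanifold of $\Omega$.

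For any other limit map $h'=\lim F^{l_j}$ I would write $h'=\lim F^{l_j-m_k}\circ F^{m_k}$ via a diagonal argument. This gives $h'=\eta\circ\rho$ with $\eta=h'|_\Sigma$. The same argument with $h'\circ\rho$ shows $\eta$ is invertible, so $\eta\in\Aut(\Sigma)$. Since $F$ commutes with $\rho$, $F|_\Sigma$ is an automorphism. Every orbit accumulates only on $\Sigma$, because every limit map has image in $\Sigma$.

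\emph{Step 2: the normal direction.} The Jacobian identity $\det D(F^n)=\delta^n$ passes to limits. If $|\delta|<1$, then $\det Dh\equiv0$, so $\dim\Sigma\le1$. If $|\delta|=1$, then $\det D\rho\equiv1$ while $\rho$ is a retraction, so $\rho$ is locally biholomorphic with $\rho=\rho^2$. This forces $\rho=\mathrm{id}$, giving $\dim\Sigma=2$ and a sequence of iterates tending to the identity. The case $|\delta|>1$ is excluded because limit maps must exist on a recurrent component, and $|\det D(F^n)|\to\infty$ would then contradict normality. This handles (3).

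\emph{Step 3: the cases $\dim\Sigma\le1$.} If $\dim\Sigma=0$, then $\Sigma$ is a point $q$ with all limits constant, and $q$ is fixed. Its eigenvalues have product $\delta$ and neither has modulus $\ge1$ without violating convergence, so $q$ is attracting. Rosay--Rudin then gives $\Omega\cong\C^2$.

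If $\dim\Sigma=1$, then $\Sigma$ is a Riemann surface carrying the recurrent automorphism $F|_\Sigma$ whose iterates form a relatively compact family in $\Aut(\Sigma)$. The closure of $\{F^n|_\Sigma\}$ is then a compact abelian Lie group. It is either finite, so $F^j|_\Sigma=\mathrm{id}$, or it contains a circle. In the latter case $\Sigma$ admits an $S^1$-action and is therefore biholomorphic to a circular domain $A$ (a disk, annulus, $\C$, $\C^*$ or $\D^*$), with $F|_\Sigma$ an irrational rotation.

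\emph{Step 4: the product structure.} This is the main obstacle: building the biholomorphism $\Omega\to\Sigma\times\C$ conjugating $F$ to the stated linear normal form. I would show that $\Omega$ is a holomorphic fiber bundle over $\Sigma$ via $\rho$, with fibers $\rho^{-1}(z)$. These fibers are stable manifolds; since $F$ contracts them at the average rate $\delta/e^{2\pi i\theta}$ and they are Fatou-regular, they are biholomorphic to $\C$.

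To linearize along the fibers, I would take $\Phi=\lim e^{-2\pi i\theta n_k}\delta^{-n_k}\,(\text{fiber coordinate of }F^{n_k})$, modelled on the Rosay--Rudin renormalization. The point to prove is that this limit exists and is nonvanishing, which should follow from the normal derivative being exactly $\delta/\lambda$ along $\Sigma$ by the constant Jacobian.

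Triviality of the bundle would come from the $\C$-fibration over the open Riemann surface $\Sigma$, which is Stein, using Oka--Grauert together with the fact that the transition maps are affine. In the periodic case I would apply the same argument to $F^j$, where the normal multiplier is $\delta^j$.
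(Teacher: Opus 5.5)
The paper gives no proof of this statement. It is quoted from \cite{ABFP2019}, which relies on the recurrent-domain arguments of \cite{BS1991b} and \cite{FS1995}, so your outline has to stand on its own. Its architecture is the right one: retraction from limit maps, the Jacobian dichotomy, an $S^1$-action on $\Sigma$, and linearization in the normal direction. Steps 2--3 are essentially fine, up to small points noted below. But Step 4, which carries the actual content of part (2), is not carried out, and the construction you propose would fail as written.

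\textbf{The gap in Step 4.} A limit of $\lambda^{-n_k}\delta^{-n_k}(\cdots)\circ F^{n_k}$ along a \emph{subsequence} satisfies no functional equation. To get $\Phi\circ F=(\delta/\lambda)\Phi$ you need the limits along $n_k$ and $n_k+1$ to coincide, i.e.\ convergence of the whole sequence $\Phi_n=\lambda^{n}\delta^{-n}\,u\circ F^n$. Note also that the factor is $(\delta/\lambda)^{-n}=\lambda^n\delta^{-n}$, not $\lambda^{-n}\delta^{-n}$. Moreover, a ``fiber coordinate'' presupposes the trivialization you are trying to build. The missing argument runs as follows.
\begin{itemize}
\item Take a holomorphic normal field $\nu$ along $\Sigma$ with $\det(\gamma',\nu)\equiv1$, where $\gamma:A\to\Sigma$ and $F\circ\gamma(t)=\gamma(\lambda t)$. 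Constant Jacobian then gives exactly $DF_{\gamma(t)}\nu(t)\equiv(\delta/\lambda)\,\nu(\lambda t)$ modulo $\gamma'(\lambda t)$.
\item Take $u$ holomorphic near $\Sigma$ with $u|_\Sigma=0$ and $du(\nu)=1$ (a Stein neighbourhood of $\Sigma$ plus Cartan B). Then $u\circ F=(\delta/\lambda)u+O(u^2)$ uniformly over rotation-invariant compact pieces of $\Sigma$.
\item Use $h=\eta\circ\rho$ to get orbits into such a tube locally uniformly. Hence $|u\circ F^n|\le C(|\delta|+\varepsilon)^n$ on compacts, and $|\Phi_{n+1}-\Phi_n|\le C\kappa^n$ with $\kappa<1$.
\item Show that $(\rho,\Phi):\Omega\to\Sigma\times\C$ is bijective. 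Injectivity follows from $\Phi\circ F^n=(\delta/\lambda)^n\Phi$ and injectivity of $u$ on small local fibres; surjectivity follows by rescaling.
\end{itemize}
Once this is done, the bundle and Oka--Grauert discussion is superfluous. Without it, that discussion does not close the gap: local triviality of $\rho$, connectedness of the fibres, and ``Fatou-regular, hence $\cong\C$'' are all unjustified. Even granted a trivialization, it would only bring $F$ to the form $(z,w)\mapsto(\lambda z,a(z)w+b(z))$, leaving cohomological equations over an irrational rotation.

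\textbf{A smaller gap in Step 1.} The claim ``$h(\Omega)\subset\Omega$ since $\Omega$ is a Fatou component'' is not automatic. A limit map a priori only takes values in $\overline\Omega$, and for $|\delta|<1$ it is not an open map, so the one-variable argument is unavailable. Your identity $\rho\circ h=h$ therefore only holds near $p$. You need an argument, or a citation, to get $\rho(\Omega)\subset\Omega$, which is what makes $\Sigma$ closed in $\Omega$.

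\textbf{Minor points.} The relative compactness of $\{F^n|_\Sigma\}$ in $\Aut(\Sigma)$, and the absence of compactly divergent subsequences, need justification. One route is properness of the $\Aut(\Sigma)$-action together with the fact that a monothetic locally compact group is compact or discrete. In Step 2, $\det D\rho$ is a priori only a unimodular constant; it equals $1$ because $\rho\circ\rho=\rho$.
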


By a circular domain in statement $(2)$ we mean either the disk, the punctured disk, an annulus, the complex plane, or the punctured plane. It is known that the disk, an annulus, and the complex plane can be realized, while the realizability of the punctured disk and the punctured plane remained open. 
\medskip

We use $\text{Aut}_c(\C^2)$ to denote the group of automorphisms of $\C^2$ with constant Jacobian. Furthermore let $\Af$ be the affine automorphism group of $\C^2$, and let $\Ec$ consist of the maps
\begin{equation}\label{eq:elementary}
 E(z,w)=(az+g(w),bw+c),\qquad a,b\in\C^*,\quad c\in\C,\quad g\in\mathcal O(\C).
\end{equation}
Set $\Sc=\langle\Af,\Ec\rangle$. In the terminology of Ahern and Forstneri\v c~\cite[\S7]{AhernForstneric}, this is the constant-Jacobian shear group which is known to be dense in $\text{Aut}_c(\C^2)$, with respect to compact open  topology. C. de Fabritiis \cite{deFabritiis} proved  that $\Sc$ has an amalgamated structure
\begin{equation*}
 \Sc=\Af*_{\Af\cap\Ec}\Ec.
\end{equation*}
 This implies that every $F\in\Sc$ is  conjugate in $\Sc$ to an element of $\Af$ or $\Ec$, or to a finite composition of  H\'enon type maps
\begin{equation}\label{eq:henon}
 H_j(z,w)=(f_j(z)-\delta_jw,z),\qquad \delta_j\in\C^*,
\end{equation}
in which each $f_j$ is a non-affine entire function. The following is the main result of this paper.
\begin{theorem}\label{thm:main}
Let $F\in\Sc$, and let $\Omega$ be an invariant recurrent Fatou component whose limit set $\Sigma$ has complex dimension one. Suppose that $F|_\Sigma$ has infinite order, i.e. $F^j|_\Sigma\neq \text{id}$ for all $j$. Then $\Sigma$ is biholomorphic to $\C$, $\D$, or an annulus. More precisely:
\begin{enumerate}
\item If $F$ is conjugate in $\Sc$ to an element in $\Af\cup\Ec$, then $\Omega=\C^2$ and $\Sigma\simeq\C$.
\item If $F$ is conjugate in $\Sc$ to a finite composition of  H\'enon type maps \eqref{eq:henon}, then $\Sigma$ is biholomorphic to a disk or an annulus.
\end{enumerate}
\end{theorem}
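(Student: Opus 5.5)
We treat the two cases of the amalgamated-product normal form separately, using \Cref{thm:abfp} throughout. Since $F|_\Sigma$ has infinite order, we are in the first alternative of \Cref{thm:abfp}(2). Thus $\Omega\simeq A\times\C$ with $A$ a circular domain, $F$ is conjugate to $(z,w)\mapsto(e^{2\pi i\theta}z,\delta e^{-2\pi i\theta}w)$ with $\theta$ irrational, and $|\delta|<1$ by \Cref{thm:abfp}(3). Conjugation inside $\Sc$ is by an automorphism of $\C^2$, which maps Fatou components, limit sets and retractions to the corresponding objects. Hence we may replace $F$ by its normal form.

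\emph{Case (1).} First let $F\in\Af$, $F(x)=Lx+v$. Recurrence and $|\delta|<1$ force a bounded orbit, and an affine map with a bounded orbit and no eigenvalue $1$ on the relevant part has a fixed point. After translation, $F=L$ is linear. The limit set is one-dimensional with an irrational rotation, so $L$ has eigenvalues $e^{2\pi i\theta}$ and $\lambda$ with $|\lambda|<1$. Then $L$ is diagonalizable, and its Fatou set is all of $\C^2$ with $\Sigma$ the eigenline, so $\Sigma\simeq\C$.

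Next let $F=E\in\Ec$. Since $w\mapsto bw+c$ must have a recurrent orbit, either $|b|=1$ or $b\neq1$ gives a fixed point $w_0$. If $|b|<1$, the $w$-coordinate converges to $w_0$. The limit dynamics then lives on the line $\{w=w_0\}$ and is $z\mapsto az+g(w_0)$, which forces $|a|=1$ with $a$ irrational. In the other case $|b|=1$ and $|a|=|\delta|<1$. In both cases one conjugates $E$ by a shear $(z,w)\mapsto(z+h(w),w)$, solving the cohomological equation $h(bw+c)-ah(w)=g(w)$ after moving the fixed point to $w_0=0$. This equation is solvable coefficientwise in the power series of $g$, since the small divisors $b^n-a$ are bounded away from zero because $|a|\neq1$ in the relevant subcase; the subcase $|a|=1$, $|b|<1$ is handled symmetrically. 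After the conjugation $F$ becomes linear, and we conclude as in the affine case that $\Omega=\C^2$ and $\Sigma\simeq\C$. The degenerate possibility $b=1$, $c\neq0$ has no recurrent orbits and is excluded.

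\emph{Case (2).} Now $F=H_n\circ\dots\circ H_1$. We must exclude $A\in\{\C,\C^*,\D^*\}$. In each of these three cases $\Sigma$ contains a nonconstant holomorphic image of $\C$ or $\C^*$ after filling in the puncture. Concretely:
\begin{itemize}[label={}]
\item For $A=\C$ or $\C^*$, the map $\phi(t)=\iota(e^{t})$ (or $\iota(t)$), with $\iota\colon A\to\Sigma$, gives an entire curve in $\Sigma\subset\C^2$.
\item For $A=\D^*$, the puncture corresponds to an end of $\Sigma$. We argue that $\Sigma$ extends across it: the retraction image is closed in $\Omega$, and $\Sigma$ is bounded near the puncture by recurrence. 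By Riemann extension the puncture fills to a point $p\in\partial\Omega$ fixed by $F$. We then aim for a contradiction from the rotation near $p$ combined with the fact that $F$ is an automorphism with a fixed point in the Julia set where $DF(p)$ has eigenvalues $e^{2\pi i\theta}$ and $\delta e^{-2\pi i\theta}$. Such a point would be a Siegel-type point lying in the Fatou set, which is a contradiction. This gives $\D^*$ a local picture, and we reduce it similarly.
\end{itemize}

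The key step is the following. An invariant entire curve $\phi\colon\C\to\C^2$ on which $F$ acts as an irrational rotation $t\mapsto e^{2\pi i\theta}t$ (or a translation, for $\C^*$) is incompatible with the Hénon structure. Write $\phi=(x,y)$ and $\phi_k=(x_k,x_{k-1})$ for the intermediate coordinates along the composition. The functional equations $x_{k+1}(t)=f_{k}(x_k(t))-\delta_k x_{k-1}(t)$ close up after $n$ steps with $x_{n}(t)=x_0(\alpha t)$. Here $\alpha t$ means $e^{2\pi i\theta}t$, or $t+\tau$ in the $\C^*$ case. Boundedness of $F^m$ on $\Omega$ gives that each $x_k$ is bounded on compacts uniformly under the rotation, so the Nevanlinna characteristics satisfy $T(r,x_k\circ\alpha)=T(r,x_k)$. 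We then compare $T(r,f_k\circ x_k)$, which grows much faster than $T(r,x_k)$ because $f_k$ is non-affine, by Clunie/Pólya-type composition estimates, with the linear recursion. The maximal index $k$ maximizing the characteristic gives $T(r,x_{k+1})\gg T(r,x_k)$, a contradiction after cycling around $n$ steps.

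The same Nevanlinna argument with $\alpha=\mathrm{id}$ should also give the announced statement that fixed point sets contain no entire curves. I expect the main obstacle to be this growth comparison, especially for polynomial $f_k$ of low degree and for rational-type $x_k$. I would first try the inequality $T(r,f\circ g)\ge(1+\epsilon)T(r,g)$ outside an exceptional set, and treat the case where all $x_k$ are polynomial by comparing degrees directly.
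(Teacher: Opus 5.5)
Your part (1) is essentially the paper's argument: conjugate by the shear $(z,w)\mapsto(z-h(w),w)$ with $h(w)=\sum_j c_jw^j/(b^j-a)$. Part (2), however, has genuine gaps.

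\textbf{The growth comparison does not close as stated.} From $f_k\circ x_k=x_{k+1}+\delta_kx_{k-1}$ you only get $d_ku_k\le u_{k-1}+u_{k+1}$ for the normalized limits $u_j=\lim_\nu T(r_\nu,x_j)/\max_iT(r_\nu,x_i)$. The excess growth can be shared by the two neighbours, so ``$T(r,x_{k+1})\gg T(r,x_k)$'' does not follow. Propagating $u_k=1$ around the cycle gives a contradiction as soon as some $f_j$ is transcendental or has degree $\ge3$. If every $f_j$ is a quadratic polynomial, however, the configuration $u_1=\dots=u_m=1$ is consistent; this already includes the classical quadratic H\'enon map. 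An estimate $T(r,f\circ g)\ge(1+\epsilon)T(r,g)$ cannot help, since you would need a factor exceeding $2$. The paper handles the all-polynomial case with a \emph{pointwise} estimate (\cref{lem:poly-max}). At a point of $\{|t|=r\}$ where $|x_k|$ attains $M(r)=\max_j\max_{|t|=r}|x_j|$, invariance of the circle under $t\mapsto\lambda t$ gives $cM(r)^{d_k}\le(1+|\delta_k|)M(r)$. This already works for $d_k=2$, and averaging into $T(r,\cdot)$ loses exactly this information.

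\textbf{The case $A=\C^*$.} Your lift $\iota\circ\exp$ turns $t\mapsto\lambda t$ into a translation $t\mapsto t+\tau$ with $\tau\in i\mathbb R$. This does not preserve the circles $\{|t|=r\}$, so $T(r,x\circ\alpha)=T(r,x)$ is unavailable. It genuinely fails for the infinite-order functions that arise: for $x(t)=\exp(\exp(e^t))$ and $\tau=i\alpha$ with $\alpha\notin2\pi\mathbb Z$, one has $T(r,x(\cdot+\tau))/T(r,x)\to0$. The paper stays on the punctured plane instead:
\begin{enumerate}
\item after $t\mapsto1/t$ it works on $\{|t|>R_0\}$ with the annular characteristic $T_A(\rho,r,\cdot)$, for which $T_A(\rho,r,f(\lambda\cdot))=T_A(\rho,r,f)$ still holds;
\item it proves the Clunie-type comparison there via Korhonen's First Main Theorem (\cref{lem:fourfiber}), which excludes an essential singularity;
\item it excludes poles by comparing pole orders (\cref{prop:exteriorgrowth}), so the $x_j$ become entire and the $\C$ case applies.
\end{enumerate}

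\textbf{The case $A=\D^*$.} \cref{prop:exteriorgrowth} is precisely what this case is missing. Boundedness of $\gamma$ near the puncture does not follow from recurrence: with transcendental factors there is no filtration, and nothing a priori prevents an essential singularity at $0$. This extension step is the main analytic content of the exclusion.

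Finally, once $\gamma$ extends to $\hat\gamma$, your claim that $p=\hat\gamma(0)$ is a ``Siegel-type point in the Fatou set'' needs an argument. The paper proves $p\in\Omega$ directly. The iterates $F^n$ are uniformly bounded on a compact tube around $\gamma(\{|t|=r\})\subset\Omega$. By the maximum principle they are then bounded on the filled tube around $\hat\gamma(\{|t|\le r\})$, so $\{F^n\}$ is normal near $p$. This contradicts the closedness of $\Sigma=\gamma(\D^*)$ in $\Omega$.
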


The result shows that  $\D^*$ and $\C^*$ do not occur as infinite-order recurrent rank-one limit sets for elements of $\Sc$. Such classification result is known to hold for the group of polynomial automorphisms $\text{Aut}_{P}(\C^2)\subsetneq \Sc$, and follows from the work of Bedford and Smillie, Forn{\ae}ss and Sibony and finally Krikorian  \cite{BS1991b, FS1995, Krikorian2025}. The infinite-order condition on $F|_\Sigma$ is not needed for $(1)$. Moreover, this condition is trivially satisfied for  $F\in \text{Aut}_{P}(\C^2)$ in $(2)$, since its periodic points $\text{Fix}(F^k)$ are discrete for all $k>0$. However, it is not known whether the same property holds for all $F\in\Sc$ which are not conjugate to elements in $\Af\cup\Ec$, see  \cite[Proposition 3.2]{ABFP2019} and the preceding
discussion therein. Here we prove that the set of periodic points of period at most $k$ contains no entire holomorphic curves. In fact we can say much more:

\begin{theorem}\label{thm:fix} Let $F\in\mathcal S_c(2)$ be a finite composition of H\'enon-type maps. For every $k\ge1$, if $S$ is a connected Riemann surface and
$\varphi:S\longrightarrow\text{Fix}(F^k)$
is a non-constant proper holomorphic map, then $S$ is hyperbolic and has no puncture ends.
\end{theorem}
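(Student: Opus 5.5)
We reduce the statement to a Nevanlinna-type growth argument along the curve. Write $F=H_m\circ\cdots\circ H_1$ and replace $F$ by $F^k$, which is again a finite composition of H\'enon-type maps, namely $km$ of them. So it suffices to treat $k=1$.

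Let $\varphi=(z,w):S\to\mathrm{Fix}(F)$ be non-constant, and track the coordinates through the chain. Put $(z_0,w_0)=(z,w)$ and $(z_j,w_j)=H_j(z_{j-1},w_{j-1})$. Then $w_j=z_{j-1}$ and $z_j=f_j(z_{j-1})-\delta_j z_{j-2}$, with the convention $z_{-1}=w$. So the sequence $u_j:=z_{j-1}$, $j=0,\dots$, satisfies the three-term recurrence
\[
u_{j+1}+\delta_j u_{j-1}=f_j(u_j),
\]
and the fixed-point condition makes it periodic: $u_{j+m}=u_j$. Each $u_j$ is a holomorphic function on $S$.

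Suppose first that some $u_j$ is constant. The recurrence, read forwards and backwards (each $\delta_j\neq0$), then forces its neighbours to satisfy one-variable relations. One checks that two consecutive constants make every $u_j$ constant, so $\varphi$ is constant. An isolated constant is also impossible: if $u_j\equiv c$, then $u_{j+1}=f_j(c)-\delta_ju_{j-1}$ is affine in $u_{j-1}$, and pushing this around the cycle gives a nontrivial entire identity in one function, which forces constancy. So all the $u_j$ are non-constant.

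Now suppose, for contradiction, that $S$ is parabolic, or has a puncture end. In either case we get a Nevanlinna theory on $S$, or on a neighbourhood $\{0<|t|<r\}$ of the puncture, where a parabolic exhaustion and properness of $\varphi$ give that the relevant $u_j$ have an essential singularity or a pole at the end; properness is exactly what rules out a removable bounded end. The key inequality is the one-variable fact that for a transcendental entire $f$ and a meromorphic $g$ on a parabolic end,
\[
T(r,f\circ g)\big/T(r,g)\to\infty
\]
outside an exceptional set (Clunie's theorem, extended to ends), while for a polynomial $f$ of degree $d\ge2$ we have $T(r,f\circ g)=d\,T(r,g)+O(1)$. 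Choose $j$ maximizing $T(r,u_j)$ along a sequence $r\to\infty$; periodicity makes this possible. The recurrence gives
\[
T(r,f_j(u_j))\le T(r,u_{j+1})+T(r,u_{j-1})+O(1)\le 2T(r,u_j)+O(1).
\]
If $f_j$ is transcendental, this contradicts Clunie. If $f_j$ is a polynomial of degree $d\ge3$, it contradicts the polynomial identity. The case $d=2$ needs a sharper count. Work with the counting function of poles at the end (poles of $u_j$ with $j$ maximal), where the maximum-principle/Bedford–Smillie filtration argument still applies. Alternatively, compare maximum moduli $M(r,u_j)$ on circles: $|u_{j+1}|\ge |f_j(u_j)|-|\delta_j||u_{j-1}|$ gives a strictly increasing chain of maxima when $|u_j|$ is largest, which is incompatible with periodicity. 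This filtration-style argument on $\log M$ is the one I would actually run; it handles every degree uniformly, since $\log M(r,f\circ g)\gg\log M(r,g)$ for non-affine $f$.

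The main obstacle is making the growth comparison rigorous on an abstract end rather than on $\C$. One needs a Clunie/P\'olya-type inequality for compositions $f\circ g$ with $g$ holomorphic near a puncture, together with control of the exceptional sets when passing to the maximizing index $j$. The case of a puncture where $u_j$ has a pole rather than an essential singularity must be treated separately, but there the pole orders satisfy $\mathrm{ord}(u_{j+1})=d\cdot\mathrm{ord}(u_j)$ for the dominant term, contradicting periodicity directly.

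Once parabolic $S$ and puncture ends are excluded, the remaining conclusion is immediate: a connected Riemann surface that is not $\C$, $\C^*$, or compact (compactness is excluded by properness into $\C^2$) and has no puncture ends is hyperbolic.
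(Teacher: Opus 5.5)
Your overall frame is sound. The reduction to $k=1$, the periodic recurrence $u_{j+1}+\delta_ju_{j-1}=f_j(u_j)$, using properness to make some $u_j$ unbounded at a puncture end, the pole-order argument, and the observation that $\C$ and $\C^*$ themselves have puncture ends all work. No Nevanlinna theory on an abstract parabolic surface is needed: a puncture end becomes $\{|t|>R_0\}$ after $t\mapsto 1/t$, and the paper handles non-hyperbolic $S$ by lifting to the universal cover $\C$.

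\textbf{The gap is in the core growth step.} Choosing $j$ to maximize $T(r,u_j)$ only gives $d_j\le 2$, and you leave $d_j=2$ open. The missing idea is to normalize and propagate.
\begin{itemize}
\item Pick $r_\nu\to\infty$ with $T(r_\nu,u_i)/\max_l T(r_\nu,u_l)\to v_i$ and $v_k=1$.
\item If $f_k$ is a polynomial, then $d_kv_k\le v_{k-1}+v_{k+1}\le 2$ forces not only $d_k=2$ but also $v_{k-1}=v_{k+1}=1$. So maximal normalized growth passes to the neighbour.
\item Going around the cycle you reach a transcendental $f_j$ with $v_j=1$. There $T(r,f_j\circ u_j)/T(r,u_j)\to\infty$ contradicts $v_{j-1}+v_{j+1}\le 2$.
\end{itemize}
This needs a transcendental factor, so the all-polynomial case must be treated separately: Friedland--Milnor in the paper, or the maximum-modulus lemma of Section~2 with $\lambda=1$.

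At a puncture you also need the annular Clunie lemma with no exceptional set, which you only assert. The paper proves it from Korhonen's first main theorem. Picard gives distinct $a_1,\dots,a_n$ with $g(a_i)=b$, whence $nT_A(\rho,r,f)\le T_A(\rho,r,g\circ f)+O(\log r)$; this is combined with $T_A(\rho,r,f)/\log r\to\infty$ at an essential singularity.

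\textbf{The fallback you say you would actually run fails as soon as a factor is transcendental.} The step ``where $|u_j|$ is largest, $|f_j(u_j)|$ dominates'' needs $|f_j(\zeta)|\ge c|\zeta|^{d}$ for large $|\zeta|$, and this holds only for polynomials. For example, $e^\zeta$ is tiny on the negative real axis, and $\sin\zeta$ vanishes at points of arbitrarily large modulus. This is exactly the loss of the filtration property that forces the paper into Nevanlinna theory. Your Bedford--Smillie/pole-counting suggestion fails for the same reason.

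Replacing pointwise values by maxima over circles would require $M(r,f\circ g)>(1+|\delta|)M(r,g)$ at the \emph{same} radius, for $g$ holomorphic near an essential singularity. Your claim $\log M(r,f\circ g)\gg\log M(r,g)$ is not justified. P\'olya's inequality applies to entire $g$ and only compares with $M(c\,M(r/2,g),f)$. That loss of radius is not controlled when $g$ has infinite order, and nothing comparable is available on a punctured neighbourhood.

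A minor point: your preliminary step showing all $u_j$ are non-constant is unnecessary, since the maximizing index is automatically non-constant. Its ``nontrivial entire identity'' is also not proved.
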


%\begin{theorem}\label{thm:fix}
%Let $F\in\Sc$ be a finite composition of H\'enon type maps \eqref{eq:henon}. Then there %exist no non-constant holomorphic maps $\gamma:\C\to \text{Fix}(F^k)$, for any $k\geq 1$.
%\end{theorem}

{\it Organization of the paper:} To motivate the proof of \Cref{thm:main}, we first illustrate in Section 2 the underlying idea in the polynomial case, where the argument can be carried out using the maximum modulus, while the general setting will require the use of Nevanlinna characteristic. In Section 3, we review some classical results from the Nevanlinna theory, which  are later used in Sections 4 and  5 to prove \cref{thm:main} and \cref{thm:fix} respectively.

\section{Exclusion of $\C$, $\C^*$ and $\D^*$ for polynomial H\'enon automorphisms}

For a finite composition of  H\'enon maps, a classical proof of this exclusion is quite elementary. However, the proof relies on the filtration property of these maps, which no longer holds in the general setting. We therefore first seek an alternative proof in the polynomial setting that can later be generalized to the non-polynomial setting.

Let $F=H_m\circ\cdots\circ H_1$, be a finite composition of polynomial H\'enon maps
$$
    H_j(z,w)=\bigl(P_j(z)-\delta_j w,z\bigr),
$$
where $\delta_j\in\C^*$ and each $P_j$ is a polynomial of degree $d_j\ge 2$. Let $\Omega$ be an invariant recurrent rank-one Fatou component of $F$, with limit set $\Sigma$, and suppose that
$F|_\Sigma$ has infinite order. Then by \cref{thm:abfp} there exists a circular domain $A$ and a biholomorphism $\gamma:A\to\Sigma$ such that
$$
    F\circ\gamma(t)=\gamma(\lambda t),
    \qquad |\lambda|=1,
$$
where $\lambda$ is not a root of unity. Write   $\gamma(t)=\bigl(x_1(t),x_{0}(t)\bigr)$,
and define holomorphic functions $x_2,\ldots,x_{m} \in\mathcal{O}(A)$ by
$$
    H_j\circ\cdots\circ H_1\circ\gamma(t)
       =\bigl(x_{j+1}(t),x_{j}(t)\bigr).
$$
This gives 
\begin{equation}\label{eq:poly-cyclic}
    P_j(x_{j}(t))
      =x_{j+1}(t)+\delta_jx_{j-1}(t),
    \qquad 1\le j\le m,
\end{equation}
together with the closing relations 
\begin{equation}\label{eq:poly-closing}
    x_{m+1}(t):=x_1(\lambda t) \quad\text{ and }\quad  x_{m}(t):=x_{0}(\lambda t).
\end{equation}
Thus we say that the functions $x_1,\ldots,x_{m}$ form a cycle. We first establish the maximum-modulus estimate that will be used throughout.

\begin{lemma}\label{lem:poly-max}
Let $U$ be equal to $\C$ or $\{t\in\C :|t|>R_0\}$ for some $R_0\ge 0$, and
suppose that $x_1,\ldots,x_{m}$ are holomorphic on $U$ and satisfy
the cyclic equations \eqref{eq:poly-cyclic} and \eqref{eq:poly-closing}.
Then there exists a constant $C_F>0$, such that
$$
   M(r):= \max_{1\le j\le m}\max_{|t|=r}|x_j(t)|     \le C_F
$$
for every circle $\{|t|=r\}\subset U$. In particular, functions $x_j$ extend holomorphically across $\infty$.
\end{lemma}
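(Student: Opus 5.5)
The plan is a direct pointwise "filtration" argument on a single circle; it uses neither the maximum principle nor any growth theory. The key observation is that the closing relations \eqref{eq:poly-closing} only involve rotation by $\lambda$. Since $|\lambda|=1$, this rotation maps every circle $\{|t|=r\}\subset U$ to itself.

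First I fix the constant. Each $P_j$ has degree $d_j\ge2$, so there is $C_F>0$ such that
$$
|P_j(z)|>(1+|\delta_j|)\,|z|\qquad\text{for all } |z|>C_F \text{ and all } 1\le j\le m .
$$
This depends only on $F$. Next, fix a circle $\{|t|=r\}\subset U$ and put $M(r)=\max_j\max_{|t|=r}|x_j(t)|$. Choose an index $j$ and a point $t_0$ on the circle with $|x_j(t_0)|=M(r)$. Suppose, for contradiction, that $M(r)>C_F$.

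Now evaluate \eqref{eq:poly-cyclic} at $t_0$. The left side satisfies
$$
|P_j(x_j(t_0))|>(1+|\delta_j|)M(r).
$$
On the right side I bound each term by $M(r)$, which requires some care at the two ends of the cycle:
\begin{itemize}
\item For $2\le j\le m-1$, both $x_{j+1}(t_0)$ and $x_{j-1}(t_0)$ are values of functions in the family at a point of the circle, so each has modulus at most $M(r)$.
\item For $j=m$, we have $x_{m+1}(t_0)=x_1(\lambda t_0)$. The point $\lambda t_0$ lies on the same circle, so this term is at most $M(r)$.
\item For $j=1$, we need $x_0(t_0)$. From $x_m(t)=x_0(\lambda t)$ we get $x_0(t_0)=x_m(\lambda^{-1}t_0)$. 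The point $\lambda^{-1}t_0$ again lies on the circle, and it lies in $U$ because $U$ is rotation invariant.
\end{itemize}
(When $m=1$ both closing relations are used at once, in the same way.) Hence the right side has modulus at most $(1+|\delta_j|)M(r)$, which contradicts the strict inequality on the left. Therefore $M(r)\le C_F$ for every circle in $U$.

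The extension statement then follows. If $U=\{|t|>R_0\}$, each $x_j$ is bounded near $\infty$, so Riemann's removable singularity theorem (applied in the coordinate $1/t$) extends it holomorphically across $\infty$. If $U=\C$, the same bound makes each $x_j$ a bounded entire function, hence constant, and in particular it extends across $\infty$.

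The only step needing care is the bookkeeping of the boundary indices $j=1$ and $j=m$. There one must check that the shifted arguments $\lambda^{\pm1}t_0$ stay on the same circle inside $U$, so that the single quantity $M(r)$ controls every term. This works only because the circles are centred at $0$ and $|\lambda|=1$. Nothing deeper is required in the polynomial case; the analogue of this step in the transcendental case is what will need the Nevanlinna characteristic.
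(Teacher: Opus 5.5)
Your proof is correct and follows the paper's argument. You pick the index and point realizing $M(r)$, evaluate the cyclic equation there, and play the superlinear growth of $P_j$ against the bound $(1+|\delta_j|)M(r)$ on the right side, using that rotation by $\lambda^{\pm1}$ preserves the circle. The only difference is cosmetic: you use the threshold $|P_j(z)|>(1+|\delta_j|)|z|$ directly instead of the paper's lower bound $|P_j(\zeta)|\ge c|\zeta|^{d_j}$, and you spell out the $j=1$ bookkeeping $x_0(t_0)=x_m(\lambda^{-1}t_0)$ that the paper leaves implicit.
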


\begin{proof}
First observe that there exist constants $c>0$ and $R>0$, such that for each $1\leq j\leq m$ we have
\begin{equation}\label{eq:poly-lower}
    |P_j(\zeta)|
       \ge c|\zeta|^{d_j},
    \qquad |\zeta|\ge R.
\end{equation}
For each $r>0$, for which $S_r:=\{|t|=r\}$ is contained in $U$, we choose an index $k$ and a point $t_r\in S_r$, such that $|x_k(t_r)|=M(r)$.

The cyclic equation whose polynomial acts on $x_k$ has the form
\begin{equation}\label{eq:poly-max-eq}
    P_{k}(x_k(t))
      =
      x_{k+1}(t)
      +
      \delta_{k}x_{k-1}(t)
\end{equation}
where we use \eqref{eq:poly-closing} when $k=1$ or $k=m$. 

Suppose first that $M(r)\ge R$.
Evaluating \eqref{eq:poly-max-eq} at $t=t_r$ and using
\eqref{eq:poly-lower}, we obtain
$$
    cM(r)^{d_{k}}
       \le
       |P_{k}(x_k(t_r))|.
$$
On the other hand, since multiplication by $\lambda$ preserves the circle $S_r$,
$$
\begin{aligned}
    |P_{k}(x_k(t_r))|
    &\le
      \bigl(1+|\delta_{k}|\bigr)M(r).
\end{aligned}
$$
Hence
$$
    cM(r)^{d_{k}-1}
       \le 1+|\delta_{k}|.
$$
Since $d_{k}\ge2$,
$$
    M(r)
      \le
      \left(
        \frac{1+|\delta_{k}|}{c}
      \right)^{1/(d_{k}-1)}.
$$

There are only finitely many factors. Therefore, with
$$
    C_F:=
    \max_{1\le j\le m}
    \left\{
      R,\,
      \left(
        \frac{1+|\delta_j|}{c}
      \right)^{1/(d_j-1)}
    \right\},
$$
we obtain  $M(r)\le C_F$ for every admissible $r$. Thus, each $x_j$ is
bounded near $\infty$, so $\infty$ is a removable singularity for each $x_j$.
\end{proof}

 {\bf Exclusion of $\C$:} If $A= \C$, then \cref{lem:poly-max} implies that all entire functions $x_j$ are bounded and therefore by Liouville’s theorem also constant. Thus, $\gamma:A\to \Sigma$ is constant, which is a contradiction.  
 \medskip 
 
{\bf Exclusion of $\C^*$:} Let $A= \C^*$, let $x_1,\ldots,x_m$, as above, be holomorphic on $\C^*$. By applying \cref{lem:poly-max} to $X_j(t)=x_j(1/t)$ we can see that all maps $x_j$ extend to an entire function, and therefore, by the same argument as before, they have to be constant. Again, this forces $\gamma:A\to \Sigma$ to be constant, which is a contradiction. 
\medskip 

{\bf Exclusion of $\D^*$:} Now assume that $A=\D^*$ and apply \cref{lem:poly-max} to $X_j(t)=x_j(1/t)$. Thus, we can extend $\gamma:\D^*\to\C^2$ to a non-constant holomorphic function $\hat \gamma:\D\to\C^2$.
We claim that $p:=\hat \gamma(0)\in\Omega$. Fix $0<r<1$ and let $S_r=\{|t|=r\}$. The curve $\gamma(S_r)$ is compact and contained in $\Omega$, therefore there exists $\epsilon>0$ such that its $\epsilon$-tubular neighborhood $T_{\epsilon, r}:=\{\gamma(t)+v: |t|=r \text{ and } \|v\|\le \epsilon\}$ is also contained in $\Omega $. By local boundedness of the iterates on $\Omega$, there exists $M>0$
such that $ \|F^n|_{T_{\epsilon, r}}\|\le M$ for all $n\ge 0$. By the maximum principle
we also have $\|F^n\|\le M$  on the compact $\widehat{T}_{\epsilon, r}=\{\hat \gamma(t)+v: |t|\le r \text{ and } \|v\|\le \epsilon\}$ for all $n\ge 0$. 
 In particular, at $t=0$, $\|F^n(p+v)\|\le  M$,
for all  $v\in \mathbb{B}(0,\varepsilon)$ and all $n\ge 0$. Thus the family $\{F^n\}$ is locally bounded in a neighborhood of $p$, and hence is normal there. Therefore $p$ belongs to the Fatou
set. Since the Fatou set is open it follows that $p\in \hat\gamma(\D)\subset\Omega$.  But this leads to a contradiction since $\Sigma=\gamma(\D^*)$ is closed in $\Omega$. Hence, punctured disk cannot be the limit set.

\section{Nevanlinna characteristic}

In this section, we collect several results on the Nevanlinna characteristic for entire functions and for holomorphic functions on the annulus $\mathcal A_{r,R}:=\{r<|z|<R\}$, where $0\le r<R\leq\infty$.

\subsection{Entire functions}
For an entire function $f$ the Nevanlinna characteristic agrees with the proximity function, hence 
$$
 T(r,f)=m(r,f)=\frac1{2\pi}\int_0^{2\pi}\log^+|f(re^{i\theta})|\,d\theta,
 \qquad \log^+x:=\max\{\log x,0\}.
$$

The following result appears in Ahern and Rudin \cite[Theorem~1.3(iv),(v)]{AhernRudin}, who attribute it to earlier work of Clunie \cite{Clunie} and Hayman \cite{Hayman}.

\begin{lemma}\label{lem:clunie}
Let $f$ be a nonconstant entire function and let $g$ be entire.
\begin{enumerate}
\item If $g$ is a polynomial of degree $d$, then
\begin{equation*}
 \frac{T(r,g\circ f)}{T(r,f)}\longrightarrow d
 \qquad (r\to\infty).
\end{equation*}
\item If $g$ is transcendental entire, then
\begin{equation*}
 \frac{T(r,g\circ f)}{T(r,f)}\longrightarrow\infty
 \qquad (r\to\infty).
\end{equation*}
\end{enumerate}
\end{lemma}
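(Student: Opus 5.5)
For part (1) the plan is a direct pointwise comparison; for part (2) the plan is Nevanlinna's second main theorem, whose exceptional set is the one real obstacle.

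\textbf{Part (1).} Write $g(w)=a_dw^d+\dots+a_0$ with $a_d\neq0$. There are constants $C,R>0$ such that
\[
\bigl|\log^+|g(w)|-d\log^+|w|\bigr|\le C\quad\text{for all } w\in\C .
\]
For $|w|\ge R$ this holds because $|g(w)|$ is comparable to $|a_d||w|^d$. For $|w|\le R$ both sides are bounded. Substituting $w=f(re^{i\theta})$ and averaging over $\theta$ gives
\[
T(r,g\circ f)=d\,T(r,f)+O(1).
\]
Since $f$ is nonconstant, $T(r,f)\to\infty$; for instance $T(r,f)\ge\log r-O(1)$ by Jensen applied to $f-a$ for a suitable value $a$. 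Dividing by $T(r,f)$ gives the limit $d$.

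\textbf{Part (2), main inequality.} Fix $K>0$. It suffices to prove $T(r,g\circ f)\ge K\,T(r,f)$ for all large $r$.

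\begin{enumerate}
\item By Picard's theorem, the transcendental function $g$ takes every value $a$, with at most one exception, infinitely often. Choose such an $a$ and distinct points $b_1,\dots,b_q$ with $g(b_i)=a$, where $q>K+1$.
\item Every zero of $f-b_i$ is a zero of $g\circ f-a$, and the sets $f^{-1}(b_i)$ are pairwise disjoint. Hence
\[
N\Bigl(r,\tfrac1{g\circ f-a}\Bigr)\ge\sum_{i=1}^q \bar N\Bigl(r,\tfrac1{f-b_i}\Bigr).
\]
\item By the first main theorem, $T(r,g\circ f)\ge N\bigl(r,1/(g\circ f-a)\bigr)-O(1)$.
\item Since $f$ is entire, $N(r,f)=0$. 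The second main theorem, applied with the $q+1$ targets $b_1,\dots,b_q,\infty$, then gives
\[
(q-1)\,T(r,f)\le\sum_{i=1}^q\bar N\Bigl(r,\tfrac1{f-b_i}\Bigr)+S(r,f),
\]
where $S(r,f)=o(T(r,f))$ outside an exceptional set $E$ of finite linear measure.
\end{enumerate}

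Combining these three inequalities gives
\[
T(r,g\circ f)\ge (q-1-o(1))\,T(r,f)>K\,T(r,f)\quad\text{for } r\notin E \text{ large}.
\]

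\textbf{Main obstacle: removing $E$.} The difficulty is that a ratio of two increasing functions is not automatically controlled on $E$. For $r\in E$ I would pick $r'\le r$ with $r'\notin E$ and $r-r'\le 1$, which is possible since $E$ has finite measure. Monotonicity then gives
\[
T(r,g\circ f)\ge T(r',g\circ f)\ge K'\,T(r',f),
\]
so what remains is to show $T(r,f)\le 2\,T(r',f)$.

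That comparison can fail for very fast-growing $f$. In that case I would instead use the maximum-modulus route:
\[
T(r,h)\le\log^+M(r,h)\le 3\,T(2r,h).
\]
Combine this with Pólya's lemma $M(r,g\circ f)\ge M\bigl(c\,M(r/2,f),g\bigr)$ and with $\log M(s,g)/\log s\to\infty$, which holds because $g$ is transcendental. This yields
\[
T(r,g\circ f)\ge K\,T(r/4,f).
\]
On $E$ this must be reconciled with the second-main-theorem bound, by choosing $K$ large relative to the growth jump from $r/4$ to $r$ in the rare ranges where that jump is large. I would first try the standard Borel-type growth lemma: $T\bigl(r+1/T(r,f),f\bigr)\le 2T(r,f)$ outside a set of finite measure. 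Its exceptional set can be arranged to contain $E$. This makes the choice of $r'$ above legitimate, and it is the step I expect to need the most care, following Clunie's original argument.
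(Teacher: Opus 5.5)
Part (1) is correct, and so is your bound $T(r,g\circ f)\ge(q-1-o(1))\,T(r,f)$ for $r\notin E$. The gap is that neither remedy removes $E$. As written, you only get the limit as $r\to\infty$ outside a set of finite measure.

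\emph{The Borel-lemma fix.} You need, for every large $r$, a point $r'\le r$ outside $E\cup E'$ with $r-r'\le 1/T(r',f)$. Knowing only that $E\cup E'$ has finite linear measure, it may contain intervals $[r_n-\ell_n,r_n]$ with $\sum_n\ell_n<\infty$ but $\ell_n\,T(r_n,f)\to\infty$. Nothing prevents this when $f$ grows fast, and then no admissible $r'$ exists for $r=r_n$. ``Arranging the exceptional set to contain $E$'' only enlarges the bad set.

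\emph{The P\'olya route.} It gives $T(r,g\circ f)\ge K\,T(r/4,f)-O(1)$. But for $f$ of infinite order the ratio $T(r,f)/T(r/4,f)$ is unbounded, and $K$ must be fixed before $r$. So choosing $K$ ``large relative to the growth jump'' is circular.

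The missing idea is not to discard the proximity function in step~3. Then the second main theorem, and with it the exceptional set, is unnecessary. Since $g(w)-a=O(w-b_i)$ near each $b_i$, and the left-hand side below is bounded off small disjoint disks around the $b_i$,
\[
\sum_{i=1}^q\log^+\frac{1}{|w-b_i|}\le\log^+\frac{1}{|g(w)-a|}+C_q\qquad(w\in\C).
\]
Substituting $w=f(re^{i\theta})$ and averaging gives $\sum_i m\bigl(r,\frac{1}{f-b_i}\bigr)\le m\bigl(r,\frac{1}{g\circ f-a}\bigr)+O(1)$. Now add your counting inequality with $N$ in place of $\bar N$; it still holds, since a zero of $f-b_i$ is a zero of $g\circ f-a$ of at least the same multiplicity. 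Applying the first main theorem to both sides yields $q\,T(r,f)\le T(r,g\circ f)+O(1)$ for \emph{every} $r$. Together with $T(r,f)\to\infty$, this proves (2).

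The paper does not prove \cref{lem:clunie} itself; it cites Ahern--Rudin, Clunie and Hayman. But this is exactly the argument it uses for the annular analogue, \cref{lem:fourfiber}(2).
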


We shall also use the elementary inequalities
\begin{align*}
 T(r,g+f)&\le T(r,g)+T(r,f)+O(1),\label{eq:sumTplane}\\
 T(r,f(\lambda\,\cdot))&=T(r,f),\qquad |\lambda|=1.%\label{eq:rotTplane}
\end{align*}
The first follows from $\log^+|u+v|\le\log^+|u|+\log^+|v|+\log2$, while the second uses the rotation invariance.

\subsection{Holomorphic functions on an annulus}
In his paper Korhonen \cite{Korhonen} collected earlier results on Nevanlinna theory outside a disk and further developed the theory for meromorphic functions on an open annulus. Here we will only consider holomorphic functions on the infinite open annulus $\mathcal A_{R_0,\infty}$, where $R_0\geq 0$.
As before, the annular characteristic of a holomorphic function $f\in \mathcal{O}(\mathcal A_{R_0,\infty})$ agrees with the annular proximity function, and is given by
$$
\begin{aligned}
 T_A(\rho,r,f)=m_A(\rho,r,f)
 =\frac1{2\pi}\int_0^{2\pi}\log^+|f(\rho e^{i\theta})|\,d\theta +\frac1{2\pi}\int_0^{2\pi}\log^+|f(re^{i\theta})|\,d\theta,
\end{aligned}
$$
where $R_0<\rho<r$, see \cite[\S2]{Korhonen}. Annular characteristic has the following elementary properties:
\begin{align*}
 T_A(\rho,r,f+g)&\leq T_A(\rho,r,f)+T_A(\rho,r,g)+O(1),\\ %\label{eq:sumTA}\\
 T_A(\rho,r,f(\lambda\,\cdot))&=T_A(\rho,r,f),\qquad|\lambda|=1, %\label{eq:rotTA}\\
\end{align*}

\begin{lemma}\label{lem:laurent}
Let $f$ be holomorphic on the annulus $\mathcal{A}_{R_0 ,\infty}$ with an essential singularity at infinity. Then for every fixed $\rho>R_0$ we have
\begin{equation}\label{eq:essentialgrowth}
 \frac{T_A(\rho,r,f)}{\log r}\longrightarrow\infty \qquad (\text{as }r\to\infty).
\end{equation}
\end{lemma}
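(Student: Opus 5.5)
Since $\rho$ is fixed, the first integral in $T_A(\rho,r,f)$ is a constant. It therefore suffices to show that the circle mean
$$m(r):=\frac1{2\pi}\int_0^{2\pi}\log^+|f(re^{i\theta})|\,d\theta$$
satisfies $m(r)/\log r\to\infty$. The plan is to compare $m(r)$ with the Laurent coefficients of $f$ by a Cauchy-type estimate, and then use the fact that an essential singularity at $\infty$ forces infinitely many positive-index coefficients to be nonzero.

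Write the Laurent expansion $f(z)=\sum_{n\in\mathbb Z}a_nz^n$ on $\mathcal A_{R_0,\infty}$. Since $\infty$ is an essential singularity, $a_n\neq0$ for infinitely many $n>0$. We cannot simply bound $|a_n|r^n\le \max_{|z|=r}|f|$, because $m(r)$ involves $\log^+$ averages rather than maxima. Instead we split off the principal part at $0$:
$$f=f_++f_-,\qquad f_+(z)=\sum_{n\ge0}a_nz^n,\qquad f_-(z)=\sum_{n<0}a_nz^n .$$
Here $f_+$ is a transcendental entire function, and $f_-$ is holomorphic on $|z|>R_0$ and bounded near $\infty$, say $|f_-|\le C$ for $|z|\ge\rho$. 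Using $\log^+|u+v|\le\log^+|u|+\log^+|v|+\log2$, we get
$$m(r,f_+)\le m(r,f)+\log^+C+\log2 \qquad (r\ge\rho).$$
So it suffices to prove $T(r,f_+)/\log r\to\infty$ for the transcendental entire function $f_+$.

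This last claim is classical. I would derive it from \Cref{lem:clunie}(2) applied with $g=f_+$ and $f(z)=z$. The ratio there becomes $T(r,f_+)/T(r,z)$, and $T(r,z)=\log^+ r=\log r$ for $r\ge1$, so the ratio tending to $\infty$ is exactly the required statement. Alternatively, one can argue directly. Fix $N$. By Jensen-type reasoning (the Poisson–Jensen inequality $\log M(r/2,f_+)\le 3\,m(r,f_+)$, valid for entire functions up to the harmless $\log^+$ adjustment), $m(r,f_+)$ dominates $\tfrac13\log M(r/2,f_+)$. Choosing $n>N$ with $a_n\neq0$, Cauchy's estimate gives $\log M(r/2,f_+)\ge n\log r+O(1)$, hence $\liminf_{r\to\infty} m(r,f_+)/\log r\ge n/3$. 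Letting $N\to\infty$ finishes the argument.

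The only delicate step is the passage from $\log^+|f|$ on circles to information about the Laurent coefficients in the presence of the singular part $f_-$. The splitting above handles this, because $f_-$ is bounded away from $R_0$ and so contributes only $O(1)$. Combining the three displays, $T_A(\rho,r,f)\ge m(r,f_+)-O(1)$, and dividing by $\log r$ yields \eqref{eq:essentialgrowth}.
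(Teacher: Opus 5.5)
Your proof is correct and follows essentially the same route as the paper. You split off the negative-power part of the Laurent series, which is bounded near $\infty$ and costs only $O(1)$; you note that the inner-circle term is constant; and you conclude from part (2) of Lemma~\ref{lem:clunie}, applied to the transcendental entire part with the identity as inner function. Your one-sided bound $m(r,f_+)\le m(r,f)+O(1)$ is all that is needed, and your Poisson--Jensen plus Cauchy-estimate argument is a valid self-contained substitute for citing that lemma.
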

\begin{proof}
Split the Laurent series of $f$ into its nonnegative and negative powers. The nonnegative part $g$ is entire, while the negative part $h$ is $O(1/z)$ at infinity. Hence
$$
 \frac1{2\pi}\int_0^{2\pi}\log^+|f(re^{i\theta})|\,d\theta
 =T(r,g)+O(1)
$$
where $T(r,g)$ is the usual Nevanlinna characteristic for entire functions. Since $\rho$ is fixed, the inner-circle proximity function is constant with respect to $r$, hence
\begin{equation*}\label{eq:laurentT}
 T_A(\rho,r,f)=T(r,g)+O(1)\qquad(\text{as } r\to\infty).
\end{equation*}
An essential singularity of $f$ at infinity is equivalent to $g$ being transcendental entire. In that case $T(r,g)/\log r\to\infty$,  see $(2)$ in \cref{lem:clunie}, which concludes the proof.
\end{proof}

The following lemma is an analogue of \cref{lem:clunie} in the annular case.

\begin{lemma}\label{lem:fourfiber}
Let $f$ be a holomorphic function on $\mathcal{A}_{R_0 ,\infty}$ with essential singularity at infinity and let $g$ be entire. Fix $\rho>R_0$.
\begin{enumerate}
\item If $g$ is a polynomial of degree $d$, then
\begin{equation*}
 \frac{T_A(\rho,r,g\circ f)}{T_A(\rho,r,f)}\longrightarrow d
 \qquad (r\to\infty).
\end{equation*}
\item If $g$ is transcendental entire, then
\begin{equation*}
 \frac{T_A(\rho,r,g\circ f)}{T_A(\rho,r,f)}\longrightarrow\infty
 \qquad (r\to\infty).
\end{equation*}
\end{enumerate}
\end{lemma}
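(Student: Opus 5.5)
The plan is to reduce \cref{lem:fourfiber} to \cref{lem:clunie} by the same Laurent-splitting device used in the proof of \cref{lem:laurent}. The composition $g\circ f$ is not simply the composition of $g$ with the entire part of $f$, so instead I would pass to a genuinely entire function by a change of variable that unwraps the annulus. Concretely, choose $R_1>R_0$ and consider $F(s):=f(R_1e^{s})$, which is holomorphic on $\{\operatorname{Re}s>\log(R_1/R_0)\}$. That is a half-plane, not the plane, so this is awkward. A better route is to keep the Laurent splitting $f=f_+ + f_-$, with $f_+$ transcendental entire and $f_-=O(1/z)$ at infinity, and to compare $g\circ f$ directly with $g\circ f_+$ on large circles.

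The first step is the reduction $T_A(\rho,r,\phi)=m(r,\phi)+O(1)$ for any fixed $\phi$ holomorphic on $\mathcal A_{R_0,\infty}$, where $m(r,\phi)$ is the outer-circle proximity term. This holds because the $\rho$-circle contributes a constant. Hence it suffices to prove
\[
m(r,g\circ f)/m(r,f)\to d\ (\text{resp. }\infty),
\]
and we already know $m(r,f)=T(r,f_+)+O(1)$ with $T(r,f_+)/\log r\to\infty$.

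For the \textbf{polynomial case} the comparison is elementary. For large $|w|$ we have $\log^+|g(w)|=d\log^+|w|+O(1)$, and for bounded $|w|$ both sides are bounded. Hence $\log^+|g(f(re^{i\theta}))|=d\log^+|f(re^{i\theta})|+O(1)$ uniformly in $\theta$. Integrating gives $m(r,g\circ f)=d\,m(r,f)+O(1)$. Dividing by $m(r,f)\to\infty$ (from \cref{lem:laurent}) yields the limit $d$.

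For the \textbf{transcendental case}, the main obstacle is that $g\circ f$ and $g\circ f_+$ differ by a perturbation $f_-$ that is small in the argument but may be amplified by $g$. My plan is as follows.
\begin{enumerate}
\item Write $g(f_++f_-)$. For $r$ large, $|f_-|\le \varepsilon$ on $|z|=r$.
\item Define $g_\varepsilon$-type minorants: by the maximum principle, for $|u|\le\varepsilon$,
\[
|g(w)|\le \max_{|\zeta-w|\le\varepsilon}|g(\zeta)|.
\]
This gives the easy direction, but we need a lower bound for $m(r,g\circ f)$.
\item To get the lower bound, use the function $G(w,u)=g(w+u)$ and the Cauchy estimate $|g(w+u)-g(w)|\le |u|\sup_{|\zeta-w|\le1}|g'(\zeta)|$. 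This is not obviously controlled.
\end{enumerate}

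Since that route stalls, I would first try the cleaner idea of arguing directly via Clunie's proof method rather than citing the result. The classical proof of \cref{lem:clunie}(2) rests on two facts:
\begin{itemize}
\item Bounds of the type $T(r,g\circ f)\ge c\,T(\mu M(r/2,f),g)$, which come from Clunie's lemma, a Poisson–Jensen/Harnack argument giving $\log M(r,h)\le 3T(2r,h)$, together with the fact that $f$ covers a large disk on most of a circle.
\item $T(s,g)/\log s\to\infty$.
\end{itemize}
I would redo this argument for functions on an annulus, using Korhonen's annular Poisson–Jensen formula to replace $\log M\le 3T(2r)$ by $\log M_A\le C\,T_A(\rho,2r)$. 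The main step is then to show that $f$ attains values of size $\ge M(r,f)^{1/2}$-ish on a set of the circle $|z|=2r$ of positive measure, uniformly. With this, $m(2r,g\circ f)\gtrsim \log M(\text{large},g)\gg \log M(r,f)\ge m(r,f)$. Finally, the doubling loss in $r$ would be handled by the standard Borel-type lemma, or by convexity of $T$ in $\log r$, where Korhonen's results give $T_A(\rho,r)$ as convex in $\log r$. I expect this annular version of Clunie's covering estimate to be the genuine difficulty of the proof.
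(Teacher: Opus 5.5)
Your treatment of part (1) is correct and is exactly the paper's argument. Part (2), however, is not proved. Once the comparison with $g\circ f_+$ stalls, what remains is a programme whose central step, an annular Clunie covering estimate, you leave open. Two of its ingredients would also fail as stated.

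First, knowing $|f|\gtrsim M(r,f)^{1/2}$ on a subset of positive measure of a circle gives no lower bound for $|g\circ f|$ there. The reason is that $|g|$ can stay bounded on large unbounded regions (e.g.\ $|e^{w}|\le 1$ for $\operatorname{Re}w\le 0$). Clunie's mechanism is different: the image under $f$ of a disc contains a large disc (P\'olya's lemma), and the maximum principle then applies. This controls only $\log M(r,g\circ f)$, which must afterwards be converted into $T$ at a larger radius.

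Second, that loss of radius is not repaired by convexity of $T_A(\rho,\cdot,f)$ in $\log r$. Convexity gives no bound on $T_A(\rho,2r,f)/T_A(\rho,r,f)$, and this ratio is unbounded whenever the entire part of $f$ has infinite order. For slowly growing transcendental $g$, the gain coming from $g$ need not compensate. Nor does a Borel-type lemma repair it: that only gives the conclusion off an exceptional set of radii, whereas the lemma asserts a genuine limit.

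The missing idea is to argue by value distribution instead of growth, which gives an inequality at every radius with no change of $r$. Fix $n$; by the Great Picard Theorem some $b\in\C$ has $n$ distinct preimages $a_1,\dots,a_n$ under $g$. Since $g(w)-b=O(w-a_j)$ near each $a_j$, one has $\sum_j\log^+\frac1{|w-a_j|}\le\log^+\frac1{|g(w)-b|}+C_n$ for all $w\in\C$. Substituting $w=f(z)$ and averaging over both circles compares the proximity functions. Every zero of $f-a_j$ is a zero of $g\circ f-b$ with at least the same multiplicity, so the annular counting functions compare as well. Korhonen's annular First Main Theorem then yields $n\,T_A(\rho,r,f)\le T_A(\rho,r,g\circ f)+O(\log r)$. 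Finally, \cref{lem:laurent} makes the $O(\log r)$ term negligible, so the $\liminf$ of the ratio is at least $n$ for every $n$.
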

\begin{proof}
Part $(1)$ simply follows from $\log^+|p(w)|=d\log^+|w|+O(1)$ and \cref{lem:laurent}.
For part $(2)$, let $n\geq 1$ be arbitrary. Since $g$ is
transcendental entire, the Great Picard Theorem gives a
finite value $b$ with $n$ distinct preimages
$a_1,\ldots,a_n$ under $g$.

Since $g(a_j)=b$, there exists a constant $C_n>0$ such
that
$$
 \sum_{j=1}^n\log^+\frac1{|w-a_j|}
 \leq \log^+\frac1{|g(w)-b|}+C_n
$$
for all $w\in\C$. Indeed, near each $a_j$ we have
$g(w)-b=O(w-a_j)$, while outside disjoint neighborhoods
of these points the sum on the left is bounded.
Substituting $w=f(z)$ and integrating over the two
circles gives
$$
 \sum_{j=1}^n
 m_A\!\left(\rho,r,\frac1{f-a_j}\right)
 \leq
 m_A\!\left(\rho,r,\frac1{g\circ f-b}\right)+O(1).
$$

Moreover, since the values $a_j$ are distinct and
satisfy $g(a_j)=b$, every zero of $f-a_j$ is also
a zero of $g\circ f-b$, with at least the same
multiplicity. Consequently, for annular counting function we have
$$
 \sum_{j=1}^n N_A(\rho,r,a_j;f)
 \leq N_A(\rho,r,b;g\circ f).
$$

By Korhonen's First Main Theorem
\cite[Theorem~4.1]{Korhonen},
$$
\begin{aligned}
 nT_A(\rho,r,f)
 &=\sum_{j=1}^n
 \left(
 m_A\!\left(\rho,r,\frac1{f-a_j}\right)
 +N_A(\rho,r,a_j;f)
 \right)+O(\log r)\\
 &\leq
 m_A\!\left(\rho,r,\frac1{g\circ f-b}\right)
 +N_A(\rho,r,b;g\circ f)+O(\log r)\\
 &=T_A(\rho,r,g\circ f)+O(\log r).
\end{aligned}
$$
Here the implied constants may depend on $n$ and
$\rho$, which are fixed. Dividing by $T_A(\rho,r,f)$ and using
\eqref{eq:essentialgrowth}, we obtain
$$
 \liminf_{r\to\infty}
 \frac{T_A(\rho,r,g\circ f)}{T_A(\rho,r,f)}
 \geq n.
$$
Since $n$ is arbitrary, the assertion follows.
\end{proof}

\section{Exclusion of three circular types for a general H\'enon composition}

Let $F=H_m\circ\cdots\circ H_1$, be a finite composition of H\'enon type maps \eqref{eq:henon}. In Section 2 we have already treated the case when all $H_j$ are polynomial maps, therefore from now on we always assume that at least one of them is transcendental. Let $\Omega$ be an invariant recurrent Fatou component with rank-one limit set $\Sigma$, and assume that $F|_\Sigma$ has infinite order.
By~\cref{thm:abfp}, there are a circular domain $A$ and a non-constant biholomorphism
$\gamma:A\to\Sigma$ such that
\begin{equation}\label{eq:gammaeq}
 F\circ\gamma(t)=\gamma(\lambda t),\qquad
 |\lambda|=1,\qquad \lambda^n\ne1\quad(n\ge1).
\end{equation}
As in Section~2, write $\gamma(t)=(x_1(t),x_0(t))$ and define the
subsequent coordinates by
$$
 H_j\circ\cdots\circ H_1\circ\gamma(t)
    =(x_{j+1}(t),x_j(t)),\qquad 1\le j\le m.
$$
Then
\begin{equation}\label{eq:cyclic}
 \begin{gathered}
 f_j(x_j(t))=x_{j+1}(t)+\delta_jx_{j-1}(t),
       \qquad 1\le j\le m,\\
 x_{m+1}(t):=x_1(\lambda t),\qquad
 x_m(t):=x_0(\lambda t),
 \end{gathered}
\end{equation}
where at least one of $f_j$ is transcendental entire function. These are the analogues of~\eqref{eq:poly-cyclic} and
\eqref{eq:poly-closing}. As before observe that $x_1,\ldots,x_m$ form a cycle. Moreover, by rotation invariance, the planar and annular characteristics of $x_0$ and $x_{m+1}$ agree with those
of $x_m$ and $x_1$, respectively.

\begin{proposition}\label{prop:planegrowth}
If $x_1,\ldots,x_m$ are entire on $\C$, then they are all constant.
\end{proposition}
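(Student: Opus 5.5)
We argue by contradiction with the Nevanlinna characteristic, imitating \cref{lem:poly-max} with $T(r,\cdot)$ in place of the maximum modulus. Suppose some $x_j$ is nonconstant. Set $T(r):=\max_{1\le j\le m}T(r,x_j)$. By \eqref{eq:cyclic} and rotation invariance we have $T(r,x_0)=T(r,x_m)$ and $T(r,x_{m+1})=T(r,x_1)$. Since there are finitely many indices, we may choose one index $k$, and a sequence $r_n\to\infty$, such that $T(r_n,x_k)=T(r_n)$ for all $n$. Then $x_k$ is nonconstant, and $T(r_n)\to\infty$.

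Apply the relation $f_k(x_k)=x_{k+1}+\delta_kx_{k-1}$, using the closing relations when $k=1$ or $k=m$. The sum inequality and rotation invariance give
$$
T(r,f_k\circ x_k)\le T(r,x_{k+1})+T(r,x_{k-1})+O(1)\le 2T(r)+O(1).
$$
On the other side, \cref{lem:clunie} applies to the nonconstant entire function $x_k$. If $f_k$ is transcendental, then $T(r_n,f_k\circ x_k)/T(r_n,x_k)\to\infty$. If $f_k$ is a polynomial of degree $d_k\ge2$, this ratio tends to $d_k$. In the polynomial case this only yields $d_k\le 2$, which is not yet a contradiction.

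To fix this I would sharpen the estimate. I would replace the crude sum bound by $T(r,u+v)\le \max(T(r,u),T(r,v))+\ldots$, but that is false in general. So instead I would weight the indices. I would use the fact that $\log^+|u+v|\le\max(\log^+|u|,\log^+|v|)+\log 2$ pointwise. This gives
$$
d_k\,m(r,x_k)\lesssim \frac1{2\pi}\int\max\bigl(\log^+|x_{k+1}|,\log^+|x_{k-1}|\bigr)\,d\theta .
$$
A cleaner route is to work with the pointwise function $u(t):=\max_j\log^+|x_j(t)|$ and its circle average $\mu(r)$. At each $t$, let $k(t)$ be the index attaining the max. Where $|x_{k(t)}(t)|$ is large, the equation at that index gives $\log|f_{k(t)}(x_{k(t)}(t))|\le u_\lambda(t)+O(1)$, where $u_\lambda(t)$ denotes the max over $x_j(t),x_j(\lambda t)$. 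The left side is at least $2u(t)-O(1)$ for polynomial $f_k$, and at least $N u(t)$ for transcendental $f_k$ outside small sets. However, $f$ can be small along some directions, so this pointwise lower bound fails in the transcendental case. This is why the Nevanlinna averages are needed.

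My concrete plan is therefore the following. For the polynomial factors, I would use the exact identity $\log^+|P(w)|=d\log^+|w|+O(1)$ pointwise, together with the pointwise max bound, to obtain a pointwise inequality. For the transcendental factors, I would use \cref{lem:clunie}(2) on averages. Averaging $u$ over circles, the quantity $\mu(r)$ is rotation invariant, so $\mu(r)\ge \mu(r)\cdot\min(2,\cdot)-O(1)$ restricted to the relevant index sets. This would force $\mu$ to be bounded, and hence every $T(r,x_j)$ to be bounded. A nonconstant entire function has $T(r,f)\to\infty$, so each $x_j$ is then constant.

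The main obstacle is mixing the pointwise control, which is available for polynomials, with the averaged control, which is all one has for transcendental $f_k$. I expect the decisive step to be showing that the maximal-growth index can be taken transcendental, or that a chain of polynomial factors multiplies the growth by $\prod d_j\ge 2$ along the cycle. If handled via iterating the relation around the cycle, the latter would contradict the bound $\le 2T$ with room to spare.
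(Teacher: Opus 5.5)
Your opening paragraph sets up the same framework as the paper, and you correctly handle the case where the maximal index $k$ has $f_k$ transcendental. The gap is the polynomial case. After obtaining $d_k\le 2$ you abandon the argument, and what follows is not a proof. The pointwise approach fails for transcendental factors, as you note yourself. The inequality ``$\mu(r)\ge\mu(r)\cdot\min(2,\cdot)-O(1)$ restricted to the relevant index sets'' is never given a precise meaning. The idea of gaining a factor $\prod d_j$ around the cycle does not work either, because $x_{k+1}=f_k(x_k)-\delta_k x_{k-1}$ is a second-order recurrence rather than a composition. Indeed, if every factor were quadratic, the averaged inequalities $d_ju_j\le u_{j-1}+u_{j+1}$ would be satisfied by $u_1=\dots=u_m=1$. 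So no contradiction can come from the polynomial factors alone. You must use the standing assumption of Section 4 that some $f_j$ is transcendental; the all-polynomial case is handled separately in Section 2.

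The missing step is the equality case of the inequality you already wrote. Because $T(r_n,x_{k\pm1})\le T(r_n)$, the bound $d_kT(r_n)(1+o(1))\le T(r_n,x_{k-1})+T(r_n,x_{k+1})+O(1)$ with $d_k\ge 2$ forces $d_k=2$ and $T(r_n,x_{k+1})/T(r_n)\to 1$. Thus $x_{k+1}$ is asymptotically maximal along the same sequence, and in particular nonconstant. The same reasoning then applies at index $k+1$, since it only needs the ratio to tend to $1$, not exact equality. This is why the paper passes to a limit vector $(u_1,\dots,u_m)$ of normalized characteristics instead of fixing $k$ with $T(r_n,x_k)=T(r_n)$. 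Moving around the cycle, using $T(r,x_{m+1})=T(r,x_1)$, after at most $m$ steps you reach an index $j$ with $f_j$ transcendental. There \cref{lem:clunie}(2) gives $T(r_n,f_j\circ x_j)/T(r_n)\to\infty$, contradicting the upper bound $2T(r_n)+O(1)$. This is exactly your ``maximal-growth index can be taken transcendental'', and it follows in two lines from what you already had.
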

\begin{proof}
Suppose some coordinate is nonconstant. Let
 $M(r):=\max_{1\le j\le m}T(r,x_j)$, and note that  $M(r)\to\infty$. Let 
 $$U(r)=\left(\frac{T(r,x_1)}{M(r)},\ldots,\frac{T(r,x_m)}{M(r)}\right)\in [0,1]^m$$
and choose a sequence $r_\nu\to\infty$ so that $U(r_\nu)$ converges to some vector 
$(u_1,\ldots,u_m)\in [0,1]^m$. Observe that $\max_j u_j=1$ and choose $k$ with
$u_k=1$. In this case $x_k$ is clearly not constant.

The cyclic identity~\eqref{eq:cyclic}, the sum inequality, and rotation
invariance from Section~3 give
\begin{equation}\label{eq:cyclicTplane}
 T(r,f_k\circ x_k)
 \le  T(r,x_{k-1})+T(r,x_{k+1})+O(1),
\end{equation}
where in characteristic estimates we read indices cyclically, so
$$T(r,x_{0})=T(r,x_{m})\quad \text{and}\quad T(r,x_{m+1})=T(r,x_{1}).$$
If $f_k$ is transcendental, part~(2) of~\cref{lem:clunie} implies
$T(r,f_k\circ x_k)/T(r,x_k)\to\infty$. Dividing
\eqref{eq:cyclicTplane} by $M(r_\nu)$ gives 
$$
 \frac{T(r_\nu,f_k\circ x_k)}{T(r_\nu,x_k)}(1+o(1))
 \le  u_{k-1}+u_{k+1}+o(1),\qquad (r_\nu\to\infty)
$$
which brings us to a contradiction.
\medskip

If $f_k$ is a polynomial of degree $d_k\ge2$, part~(1)
of~\cref{lem:clunie} gives
$$
 d_k u_k\le u_{k-1}+u_{k+1}\le2.
$$
Since $u_k=1$, necessarily $d_k=2$ and
$u_{k-1}=u_{k+1}=1$. Thus maximal normalized growth propagates to the
next index $k+1$. Continuing around the finite cycle, we eventually
reach an index $j$ for which $f_j$ is transcendental, giving the preceding contradiction.
\end{proof}

{\bf Exclusion of $\C$:} By \cref{prop:planegrowth} all the maps $x_j(t)$ are constant. In particular this implies that the map $\gamma:\C\to\Sigma$ is constant, which brings us to a contradiction.

\begin{proposition}\label{prop:exteriorgrowth}
If $x_1,\ldots,x_m$ are holomorphic on the annulus
$\mathcal A_{R_0,\infty}$, then none has an essential singularity at
infinity. Moreover functions $x_j$ extend holomorphically across $\infty$.
\end{proposition}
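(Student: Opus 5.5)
We mimic the proof of \cref{prop:planegrowth}, using the annular characteristic $T_A(\rho,r,\cdot)$ in place of $T(r,\cdot)$ and \cref{lem:fourfiber} in place of \cref{lem:clunie}. Fix $\rho>R_0$. Suppose some $x_j$ has an essential singularity at infinity, and set
$$M(r):=\max_{1\le j\le m}T_A(\rho,r,x_j).$$
By \cref{lem:laurent}, $M(r)/\log r\to\infty$. For coordinates without an essential singularity the Laurent expansion has only finitely many positive powers, so their characteristic is $O(\log r)=o(M(r))$. Form the normalized vector
$$U(r)=\bigl(T_A(\rho,r,x_j)/M(r)\bigr)_{j=1}^m\in[0,1]^m$$
and pass to a sequence $r_\nu\to\infty$ along which $U(r_\nu)\to(u_1,\dots,u_m)$. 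Then $\max_j u_j=1$; choose $k$ with $u_k=1$. This forces $x_k$ to have an essential singularity at infinity, since otherwise $u_k=0$. Hence \cref{lem:fourfiber} applies to $x_k$.

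From \eqref{eq:cyclic}, the sum inequality and rotation invariance for $T_A$ give
$$T_A(\rho,r,f_k\circ x_k)\le T_A(\rho,r,x_{k-1})+T_A(\rho,r,x_{k+1})+O(1),$$
with indices read cyclically. The closing relations $x_{m+1}(t)=x_1(\lambda t)$ and $x_0(t)=x_m(\lambda t)$ make this legitimate, because rotation by $|\lambda|=1$ preserves $\mathcal A_{R_0,\infty}$ and both circles $|t|=\rho$ and $|t|=r$. If $f_k$ is transcendental, part (2) of \cref{lem:fourfiber} makes the left side $\gg M(r_\nu)$ while the right side is at most $(2+o(1))M(r_\nu)$, a contradiction. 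If $f_k$ is a polynomial of degree $d_k\ge2$, part (1) gives $d_ku_k\le u_{k-1}+u_{k+1}\le 2$. This forces $d_k=2$ and $u_{k+1}=1$, so $x_{k+1}$ again has an essential singularity and the argument repeats at $k+1$. Moving around the finite cycle, we eventually reach an index with $f_j$ transcendental, since at least one factor is transcendental in this section, and obtain the contradiction. So no $x_j$ has an essential singularity at infinity.

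For the extension, it remains to exclude poles at infinity; I expect this to be the main obstacle, since the Nevanlinna argument above only controls essential singularities. Suppose some $x_j$ has a pole at infinity, and let $k$ be an index of maximal pole order $p\ge1$ among $x_1,\dots,x_m$. Choose $k$ so that $f_k$ is non-affine; this requires first checking that pole orders propagate around the cycle, as in the polynomial step above. If $f_k$ is transcendental, then $f_k\circ x_k$ has an essential singularity at infinity: near infinity $x_k$ covers a neighbourhood of $\infty$, so Picard-type behaviour of $f_k$ is pulled back. The right side $x_{k+1}+\delta_kx_{k-1}$, however, is meromorphic at infinity, a contradiction. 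If $f_k$ is a polynomial of degree $d_k\ge2$, the left side has pole order $d_kp>p$, while the right side has order at most $p$, again a contradiction. The remaining point is the propagation to a non-affine index. Since all $f_j$ in \eqref{eq:henon} are non-affine, every index qualifies, and the argument closes. Therefore every $x_j$ is bounded near infinity and extends holomorphically across $\infty$ by the Riemann removable singularity theorem.
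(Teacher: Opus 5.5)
Your proposal is correct and follows the paper's proof essentially step for step. The steps match: the normalized annular-characteristic vector along $r_\nu$, then \cref{lem:fourfiber} with $u_k=1$ propagated around the cycle to a transcendental factor, then the maximal-pole-order comparison, where, as you eventually note, no propagation is needed because every $f_k$ is non-affine. One harmless slip: the closing relation actually gives $x_0(t)=x_m(\lambda^{-1}t)$ rather than $x_m(\lambda t)$, which does not matter since $T_A$ and pole orders are invariant under rotations.
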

\begin{proof}
Suppose some coordinate has an essential singularity at infinity.
Fix $\rho>R_0\ge 0$ and let $ M(r):=\max_{1\le j\le m}T_A(\rho,r,x_j)$.
By~\cref{lem:laurent}, $M(r)/\log r\to\infty$. As above, choose a sequence
$r_\nu\to\infty$ such that 
$$
 u_j=\lim_{\nu\to\infty}\frac{T_A(\rho,r_\nu,x_j)}{M(r_\nu)},
 \qquad 0\le u_j\le1,\quad \max_j u_j=1.
$$
Choose $k$ with $u_k=1$. Then $x_k$ has an essential singularity at
infinity. Indeed,  otherwise it is meromorphic there and
$T_A(\rho,r,x_k)=O(\log r)$, contradicting $u_k=1$.

The cyclic identity and the annular estimates from Section~3
give
\begin{equation}\label{eq:annular}
 T_A(\rho,r,f_k\circ x_k)
 \le T_A(\rho,r,x_{k-1})+T_A(\rho,r,x_{k+1})+O(1).
\end{equation}
where again $T_A(\rho,r,x_{0})=T_A(\rho,r,x_{m})$ and $T_A(\rho,r,x_{m+1})=T_A(\rho,r,x_{1})$.
\medskip

If $f_k$ is transcendental, part~(2) of~\cref{lem:fourfiber} gives
$T_A(\rho,r,f_k\circ x_k)/T_A(\rho,r,x_k)\to\infty$.
By replacing $r$ by $r_\nu$ in \eqref{eq:annular} and dividing the equation by $M(r_\nu)$ obtain a contradiction as we let $r_\nu\to\infty$.
\medskip 

If $f_k$ is a polynomial of degree $d_k\ge2$, part~(1)
of~\cref{lem:fourfiber} gives
$$
 d_k u_k\le u_{k-1}+u_{k+1}\le2.
$$
Hence $d_k=2$ and $u_{k-1}=u_{k+1}=1$. In particular, $x_{k+1}$ again has an essential singularity at infinity. By propagating maximal normalized growth around the cycle we eventually
reach an index $j$ for which $f_j$ is a transcendental entire function, and we obtain the same contradiction as before. Therefore each $x_j$ is meromorphic at infinity.
\medskip

Finally let us prove that functions $x_j$ extend holomorphically across $\infty$.
Suppose some coordinate has a pole there, and choose $x_k$ with
maximal pole order $p>0$. If $f_k$ is a polynomial of degree
$d_k\ge2$, then $f_k\circ x_k$ has pole order $d_kp>p$, while the
right-hand side of equation \eqref{eq:cyclic} has pole order at most $p$.
If $f_k$ is transcendental entire, then $f_k\circ x_k$ has an
essential singularity at infinity, while the right-hand side of equation \eqref{eq:cyclic} is
meromorphic there. Both cases are impossible. Thus all singularities
at infinity are removable.

\end{proof}

{\bf Exclusion of $\C^*$:} By applying \cref{prop:exteriorgrowth}  to functions $X_j(t)=x_j(1/t)$ we can see that functions $x_j$ can be holomorphically extended over $0$, hence they are entire. Furthermore, by \cref{prop:planegrowth} to these functions must be constant, which brings us to a contradiction. 
\medskip 

{\bf Exclusion of $\D^*$:} Now assume that $A=\D^*$ and apply \cref{prop:exteriorgrowth} to $X_j(t)=x_j(1/t)$. Thus, we can extend $\gamma:\D^*\to\C^2$ to a non-constant holomorphic function $\hat \gamma:\D\to\C^2$. In order to reach a contradiction it is enough to prove $p:=\hat \gamma(0)\in\Omega$. Indeed, by \cref{thm:abfp} the limit set  $\Sigma=\gamma(\D^*)$ is closed in $\Omega$. The remainder of the proof proceeds exactly as in the polynomial case, see Section~2.

\begin{proof}[The proof of part (1) of \cref{thm:main}] Let us assume that $F$ is conjugate in $\Sc$ to an element in $\Af\cup\Ec$, and that it has an invariant recurrent Fatou component $\Omega$ whose limit set $\Sigma$ has complex dimension one. Up to conjugation by affine map such map can be written in the following form 
$$F(z,w)=(az+f(w),bw), \qquad\text{and}\qquad F(0)=0$$
where either  $|a|< 1$ and $|b|=1$, or else $|a|=1$ and $|b|<1$. Let $f(w)=\sum_{j\geq 1} c_jw^j$ and observe that
\begin{align*}
F^n(z,w)&=\left(a^nz+\sum_{j\geq 1} c_jw^j\left(\sum_{k=0}^{n-1}a^{k}b^{(n-1-k)j}\right), b^nw\right)\\
&=\left(a^nz+ \sum_{j\geq 1} c_j\frac{b^{nj}-a^n}{b^j-a}w^j, b^nw\right).
\end{align*}
Let $h(w)=\sum_{j\geq 1} \frac{c_j}{b^j-a}w^j$ and observe that it is well defined entire function.
If $|a|=1$, then $$F^n(z,w)=a^n(z-h(w),0)+o(1),$$
 and if $|b|=1$, then 
 $$F^n(z,w)=(h(b^nw),b^nw)+o(1),$$ 
 uniformly on compacts in $\C^2$. From here we can easily conclude that $\Omega=\C^2$ and $\Sigma\simeq\C$. 
\end{proof}

\section{Proof of \cref{thm:fix}}
It suffices to prove the theorem for $k=1$, since every iterate $F^k$ is again a finite composition of H\'enon-type maps. Let $F=H_m\circ\cdots\circ H_1$.
If all factors are polynomial, then $\operatorname{Fix}(F)$ is discrete by \cite[Theorem 3.1]{FriedlandMilnor}, and the conclusion follows. We may therefore assume that at least one factor is transcendental.

Suppose $\varphi:S\to\operatorname{Fix}(F)$ is a non-constant proper holomorphic map, and assume that $S$ is not hyperbolic. Clearly $S$ can not be compact, hence by the uniformization theorem there exists a  universal covering $\pi:\C\to S$. Let us define an entire map  $\gamma=\varphi\circ\pi$. Write $\gamma(t)=(x_1(t),x_0(t))$ and define entire functions
$x_2,\ldots,x_{m+1}$ recursively by
\begin{equation}\label{eq:cycle2}
 x_{k+1}(t)=f_k(x_k(t))-\delta_k x_{k-1}(t),
 \qquad 1\leq k\leq m.
\end{equation}
Moreover, since $F\circ\gamma=\gamma$ we obtain
$$x_0(t)=x_m(t)\qquad\text{and} \qquad x_1(t)=x_{m+1}(t).$$
Therefore, the entire functions $x_1,\ldots,x_{m}$ form a cycle and we can prove that they are all constant, by simply following the proof of \cref{prop:planegrowth}, with $\lambda=1$. This contradicts the fact that $\gamma$ is non-constant. Hence $S$ is hyperbolic.

\medskip

We now prove that $S$ has no puncture ends.Suppose otherwise. Then there exists a conformal embedding $\pi:\D^*\to S$ such that $\pi(t)$ leaves every compact subset of $S$ as $t\to 0$. Set $\gamma=\varphi\circ\pi$. As before, using the system of equations \eqref{eq:cycle2} we obtain functions $x_1,\ldots, x_m\in \mathcal{O}(\D^*)$ which form a cycle. By applying the  proof \cref{prop:exteriorgrowth}, to the functions $X_j(t)=x_j(1/t)$ and $\lambda=1$, we can show that $x_j$ extend holomorphically across the origin. Thus, the map $\gamma$ extends to a holomorphic function $\hat\gamma:\D\to\operatorname{Fix}(F)$. Now take any sequence of points $t_n\to 0$ in $\D$. Since $\hat\gamma$ is holomorphic $\hat\gamma(t_n)\rightarrow p\in \operatorname{Fix}(F)$. The set $K=\{\hat\gamma(t_n)\mid n\geq 1\}\cup \{p\}$ is  compact and since $\varphi$ is proper, it follows that $\varphi^{-1}(K)$ is compact in $S$. But by our puncture end assumption $\pi(t_n)$ leaves every compact subset of $S$, which is a contradiction.

\end{document}